\documentclass[a4paper,11pt]{amsart}

\usepackage{a4wide}
\usepackage{tikz}
\usepackage[T1]{fontenc}
\usepackage{lmodern, comment}
\usepackage{mathtools}
\usepackage{amssymb}

\newcommand{\R}{\mathbb{R}}
\newcommand{\C}{\mathbb{C}}

\allowdisplaybreaks

\newif\ifdetails
\detailstrue
\newcommand{\DETAIL}[1]%
{\ifdetails\par\fbox{\begin{minipage}{0.9\linewidth}\textit{Detail:}
      #1\end{minipage}}\par\fi}
\newcommand{\TODO}[1]%
{\ifdetails\par\fbox{\begin{minipage}{0.9\linewidth}\textbf{TODO:}
      #1\end{minipage}}\par\fi}

\newtheorem{lemma}{Lemma}
\newtheorem{proposition}[lemma]{Proposition}
\newtheorem{theorem}[lemma]{Theorem}

\theoremstyle{remark}

\title[The distribution of subtrees in dense graphs]{The distribution of subtrees in dense graphs and the roots of the subtree polynomial}

\author{
Stephan Wagner
\and Ruoyu Wang}
\thanks{Department of Mathematics, Uppsala University, Uppsala, Sweden, E-mail: {\tt stephan.wagner@math.uu.se, ruoyu.wang@math.uu.se}; supported by the Swedish research council (Vetenskapsrådet), grant  2022-04030}

\begin{document}

\begin{abstract}
For a graph $G$ with $n$ vertices and a positive integer $k \leq n$, let $s_k(G)$ be the number of subtrees (subgraphs that are trees, not necessarily induced) of $G$ with $k$ vertices. The subtree polynomial of $G$ is $S(G;x) = \sum_{k=1}^n s_k(G) x^k$. In this paper, we consider dense connected graphs with a minimum degree that is linear in the number of vertices. We prove that the number of missing vertices in a random subtree is asymptotically Poisson-distributed and deduce that all the roots of the subtree polynomial have to be close to $0$.
\end{abstract}

\maketitle

\section{Introduction}

By a \emph{subtree} of a graph $G$, we mean any (not necessarily induced) subgraph that is a tree. For a graph $G$ with $n$ vertices and a positive integer $k \leq n$, we let $s_k(G)$ denote the number of subtrees of $G$ with $k$ vertices. The subtree polynomial of $G$ can now be defined in analogy to other graph polynomials (for example the independence polynomial or the domination polynomial) by
$$S(G;x) = \sum_{k=1}^n s_k(G) x^k.$$
It first appears in the graph-theoretical literature in Jamison's work on the mean subtree order \cite{jamison1983average,jamison1984monotonicity} and on alternating Whitney sums \cite{jamison1987alternating,jamison1990alternating} in the special case where $G$ itself is a tree. It occurs quite frequently as an auxiliary tool in the study of the mean subtree order, for example in the proof of the gluing lemma in~\cite{mol2019maximizing}.

In the recent paper of Chin, Gordon, MacPhee and Vincent \cite{chin2018subtrees}, the subtree polynomial was also considered for arbitrary graphs. Motivated by questions raised by Chin et al.~on the probability that a random subtree is spanning (which can be expressed as $\frac{s_n(G)}{S(G;1)}$ in our notation), the behaviour of the coefficients $s_k(G)$ was studied in \cite{wagner2021probability} for dense random graphs following the classical Erd\H{o}s-R\'enyi random graph model.

In particular, it was shown that the coefficients of the highest powers in the subtree polynomial, counting trees that are close to spanning, follow a Poisson limit law of the following type:

\begin{theorem}[{see \cite[Corollary 1]{wagner2021probability}}]\label{thm:random-graphs}
Consider the random graph $G = G(n,p)$, and suppose that $p \to p_{\infty} > 0$. As $n \to \infty$, we have, for every fixed nonnegative integer $k$,
$$\frac{s_{n-k}(G)}{s_n(G)} \overset{p}{\to} \frac{1}{k!} (ep_{\infty})^{-k}.$$
\end{theorem}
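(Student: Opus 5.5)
We will prove the limit by comparing almost-spanning trees with spanning trees through a local switching (double counting) argument in $G = G(n,p)$. The basic identity is
$$s_{n-k}(G) = \sum_{S \subseteq V,\ |S| = k} \tau(G - S),$$
where $\tau$ denotes the number of spanning trees. So the task is to show that, with high probability and uniformly over all $k$-sets $S$, the ratio $\tau(G-S)/\tau(G)$ is asymptotically $(ep_\infty n)^{-k}$, up to errors that are negligible after summing over the $\binom{n}{k} \sim n^k/k!$ choices of $S$. This already produces the constant $\frac{1}{k!}(ep_\infty)^{-k}$, provided the individual ratio is right. We will first handle $k=1$ and then iterate, or treat general fixed $k$ directly.

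\textbf{Step 1 (removing one vertex).} For a vertex $v$, consider a uniform random spanning tree $T$ of $G$. Removing $v$ from $T$ splits it into $d_T(v)$ components. A spanning tree of $G-v$ is obtained exactly when $v$ is a leaf, and in that case the map is $\deg_G(v)$-to-one. Hence
$$\tau(G) \cdot \mathbb{P}(v \text{ is a leaf of } T) = \deg_G(v)\,\tau(G-v).$$
Therefore $\tau(G-v)/\tau(G) = \mathbb{P}(\text{$v$ leaf})/\deg_G(v)$. In $G(n,p)$ we have $\deg_G(v) = (1+o(1))p_\infty n$ uniformly in $v$ with high probability. The key claim is that the leaf probability of each vertex is $e^{-1}+o(1)$ uniformly in $v$, with high probability. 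Granting this, $s_{n-1}/s_n = \frac{1}{\tau(G)}\sum_v \tau(G-v) \to (ep_\infty)^{-1}$.

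\textbf{Step 2 (degree distribution in a random spanning tree).} To show that the leaf probability is $e^{-1}$, we will use the transfer-current theorem / Kirchhoff: $\mathbb{P}(e\in T)$ is the effective resistance of $e$. In a dense random graph, the Laplacian is close to $p n I - pJ$ in spectral norm (eigenvalue concentration, the nonzero eigenvalues being $pn(1+o(1))$). Thus every edge has effective resistance $(2+o(1))/(pn)$ uniformly, and more generally the transfer-current matrix entries among edges at $v$ are uniformly small off the diagonal. The edges at $v$ then form a determinantal process in which $\approx pn$ edges each have probability $\approx 2/(pn)$ and correlations $O(1/n)$ are negligible. By the standard fact that such determinantal processes are sums of independent Bernoullis (Hough–Krishnapur–Peres–Virág), with parameters equal to the eigenvalues of the restricted kernel, $d_T(v)-1$ is asymptotically $\mathrm{Poisson}(1)$. (The count of tree edges at $v$ has mean $\sum_e R_e = 2-2/n$ summed appropriately, and a trace computation of $K^2$ gives variance $\approx 1$.) Hence $\mathbb{P}(d_T(v)=1)\to e^{-1}$.

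\textbf{Step 3 (general $k$).} We apply Step 1 iteratively: $\tau(G-S)/\tau(G)$ is a product of $k$ ratios for the graphs $G, G-v_1, \dots$. Each graph $G - \{v_1,\dots,v_j\}$ with $j<k$ is again a dense graph with the same spectral properties (deleting $O(1)$ vertices perturbs the Laplacian negligibly), so Steps 1–2 apply uniformly over all such sets. This gives $\tau(G-S)/\tau(G) = (1+o(1))(ep_\infty n)^{-k}$ uniformly in $S$. Summing over $S$ yields the theorem, with convergence in probability coming from the with-high-probability uniform estimates.

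The main obstacle is Step 2: making the spectral approximation of the Laplacian strong enough to control the determinantal kernel at $v$ uniformly over all $v$ (and over all deletions of $O(1)$ vertices), so that the Poisson limit holds uniformly. We would try bounding $\|L-(pnI-pJ)\| = O(\sqrt{n\log n})$ via matrix concentration, which gives entry errors of $O(n^{-3/2}\sqrt{\log n})$ in the pseudoinverse. That should suffice.
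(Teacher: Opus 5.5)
The paper does not prove this statement; it quotes it from \cite{wagner2021probability}. The route it points to is the double counting of Section~2, which gives $s_{n-k}/s_n=\frac{1}{k!}\beta(G)^k(1+o(1))$ for every graph of linear minimum degree (Theorem~\ref{thm:poisson}, using concentration of $w(T)$ from Lemma~\ref{lem:pemantle_peres}). Since $G(n,p)$ has minimum degree at least $p_\infty n/2$ with high probability, that route reduces everything to $k=1$, i.e.\ to $\beta(G)=\mathbb{E}\,w(T)\to(ep_\infty)^{-1}$. With all degrees $\sim p_\infty n$, this says a uniform spanning tree has $(e^{-1}+o(1))n$ leaves on average; the paper itself only records the lower bound from \cite{hladky2018local}.

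Your Step~1 is exactly the identity behind $\beta(G)=\mathbb{E}\,w(T)$, but from there your route is genuinely different. You obtain the constant $e^{-1}$ from the transfer-current theorem and the spectral approximation $L\approx pnI-pJ$. You reach general $k$ by telescoping $\tau(G-S)/\tau(G)$ and reapplying the one-vertex estimate in $G-\{v_1,\dots,v_j\}$, rather than by averaging over spanning trees. This buys a pointwise estimate for every single $\tau(G-S)$ and needs no concentration inequality. The price is that it relies on the leaf probability being $e^{-1}+o(1)$ uniformly in the vertex and under bounded deletions, which is special to quasirandom graphs. So it would not give Theorem~\ref{thm:poisson}, where leaf probabilities vary between vertices and only their weighted average $\beta(G)$ enters.

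One sentence in Step~2 is wrong, and the argument there must be redone: the off-diagonal correlations are not negligible. For $e=va$, $f=vb$ with $a\neq b$ you have $K_v(e,f)=(\mathbf{1}_v-\mathbf{1}_a)^{T}L^{+}(\mathbf{1}_v-\mathbf{1}_b)=\frac{1}{pn}+O(n^{-3/2}\sqrt{\log n})$, and there are about $(pn)^2$ such entries. Dropping them would give $d_T(v)\approx\mathrm{Poisson}(2)$, a leaf probability of $2e^{-2}$, and positive mass at $d_T(v)=0$, which is impossible.

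Your value $\mathrm{tr}(K_v^2)\approx 1$ comes entirely from these entries. Moreover, mean $2$ and variance $1$ do not by themselves force $1+\mathrm{Poisson}(1)$: four eigenvalues equal to $\frac12$ give the same two moments. What you need is the eigenvalue profile. Write $K_v=B_v^{T}L^{+}B_v$, where $B_v$ has columns $\mathbf{1}_v-\mathbf{1}_a$ for $a\sim v$, so $B_v^{T}B_v=I_d+J_d$ and $\|B_v\|^2=d+1$. Together with $\|L^{+}-(pn)^{-1}(I-J/n)\|=O(n^{-3/2}\sqrt{\log n})$, this gives $\|K_v-(pn)^{-1}(I_d+J_d)\|=O(\sqrt{\log n/n})$. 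By Weyl's inequality, $\lambda_1=\frac{d+1}{pn}+o(1)=1+o(1)$ and $\max_{i\ge 2}\lambda_i=o(1)$, while $\sum_{i\ge 2}\lambda_i=\mathrm{tr}\,K_v-\lambda_1=1+o(1)$. Le Cam's inequality then yields $d_T(v)-1\to\mathrm{Poisson}(1)$. This holds uniformly in $v$ and in the deleted set, because $L_{G-S}$ is a principal submatrix of $L_G$ minus a diagonal matrix with entries at most $|S|$. With this repair your proof goes through.
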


Related to this result, we remark that Luo, Xu and Tian \cite{luo2024random} proved unimodality of the subtree polynomial for various special classes of graphs. The distribution of the subtree sizes in a tree was shown to be typically (but not always) asymptotically normal, see \cite{ralaivaosaona2018distribution}.

The first aim of this paper is to show that a weaker assumption suffices to obtain a result similar to Theorem~\ref{thm:random-graphs}: an asymptotic Poisson law holds for arbitrary (non-random) graphs if the minimum degree is linear in the number of vertices. 

\begin{theorem}\label{thm:poisson}
Let $\alpha > 0$ be fixed, and consider a connected graph $G$ with $n$ vertices whose minimum degree $\delta$ is at least $\alpha n$. Set $\beta(G) = s_{n-1}(G)/s_n(G)$. We have, for $k = o(n^{1/3})$,
$$\frac{s_{n-k}(G)}{s_n(G)} = \frac{1}{k!} \beta(G)^{k} \big(1+O(kn^{-1/3})\big),$$
with an error term that is uniform in $k$ for $k \leq n^{1/3-\epsilon}$ if $\epsilon > 0$ is fixed.
\end{theorem}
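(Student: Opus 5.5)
We compare subtrees on $n-k$ vertices with spanning trees through a double counting of pairs (tree, extension). For a subtree $T$ of $G$ with vertex set $V\setminus S$, where $|S|=k$, we call a vertex $v\in S$ \emph{attachable by a leaf} if $v$ has a neighbour in $V\setminus S$. Removing a leaf from a subtree on $n-k+1$ vertices gives a subtree on $n-k$ vertices. Counting the pairs (subtree on $n-k+1$ vertices, leaf) in two ways yields
\[
\sum_{|T|=n-k+1} \ell(T) \;=\; \sum_{|T|=n-k} \sum_{v\notin T} d_T(v),
\]
where $\ell(T)$ is the number of leaves of $T$ and $d_T(v)$ is the number of neighbours of $v$ in $T$. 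This identity, together with an analysis of how $\ell(T)$ and $d_T(v)$ concentrate, is the backbone of the argument. The strategy is to show that, for the typical subtree $T$ on $n-k$ vertices, the inner sum is $k\cdot\gamma(1+O(n^{-1/3}))$ for a quantity $\gamma$ independent of $k$. Likewise, the average of $\ell(T)$ over subtrees on $n-k+1$ vertices should be $\lambda(1+O(n^{-1/3}))$ with $\lambda$ independent of $k$. This gives
\[
\frac{s_{n-k}}{s_{n-k+1}}=\frac{\lambda}{k\gamma}\bigl(1+O(n^{-1/3})\bigr).
\]
Taking $k=1$ identifies $\lambda/\gamma=\beta(G)(1+O(n^{-1/3}))$. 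Multiplying over $k$ steps then produces $\beta^k/k!$ with error $(1+O(n^{-1/3}))^k=1+O(kn^{-1/3})$, which is valid uniformly for $k\le n^{1/3-\epsilon}$.

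In practice, the cleanest form of this is to consider directly the ratio $s_{n-k}/s_n$. Each subtree $T$ on $n-k$ vertices is extended to spanning trees by attaching the $k$ missing vertices. Conversely, a spanning tree whose set $S$ of $k$ vertices consists of leaves, with $S$ removable, yields such a $T$. The key steps are as follows.

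\begin{enumerate}
\item Express $s_{n-k}(G)$ as a sum over $k$-sets $S$ of the number of spanning trees of $G-S$ (for $k$ small, $G-S$ is still connected with minimum degree at least $\alpha n-k$).
\item Compare $\tau(G-S)$ with $\tau(G)$ via the effective-resistance/Kirchhoff formula. Removing a vertex $v$ multiplies the number of spanning trees by $1/\det$-type factors. In fact $\tau(G-v)/\tau(G)=1/L^{+}$-type quantities, namely $\tau(G)/\tau(G-v)=\sum$ over weights at $v$, which equals the expected contribution of $v$ as a leaf.
\item Show that these factors are nearly multiplicative over $S$: $\tau(G-S)/\tau(G)=\prod_{v\in S}\rho_v\,(1+O(k^2/n))$ when the vertices of $S$ are far apart in the electrical sense. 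In dense graphs all effective resistances are $O(1/n)$, and the correlations between removals are $O(1/n)$ per pair.
\item Sum over $S$: $\sum_{|S|=k}\prod_{v\in S}\rho_v=\frac{1}{k!}(\sum_v\rho_v)^k(1+O(k^2/n))$, since $\rho_v=O(1/n)$ uniformly. Finally, identify $\sum_v\rho_v=\beta(G)(1+O(n^{-1/3}))$ from the case $k=1$.
\end{enumerate}

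The main obstacle is step (3), the near-multiplicativity of the spanning-tree ratios under deletion of several vertices. This requires controlling the transfer-current/effective-resistance matrix of a dense graph that need not be an expander. Indeed, a graph with linear minimum degree can consist of a bounded number of dense clusters joined sparsely, and the spectral gap can then be tiny. I would first try a regularity-type decomposition. With minimum degree $\alpha n$ the graph splits into at most $1/\alpha$ well-connected pieces (each with spectral gap $\Omega(n)$ after normalisation), joined by possibly few edges. I would then handle the correlations inside pieces via resistance bounds of order $1/n$, and between pieces via the near-independence of the trees' restrictions. I expect that balancing the error between the cut-edge structure and the $k^2/n$ correlation terms is what produces the exponent $n^{-1/3}$ rather than $n^{-1/2}$. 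Concretely, the plan is to condition on a threshold for when a piece counts as well connected, chosen at scale $n^{2/3}$, and to optimise that scale.
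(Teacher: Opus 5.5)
There is a genuine gap. Your whole argument rests on step (3), which you do not prove and which is false as stated.

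Step (2) can be made exact by elementary double counting, with no Kirchhoff or resistance theory. A spanning tree of $G-S$ extends in $\prod_{v\in S}d(v)\,(1+O(k^2/n))$ ways to a spanning tree of $G$ in which every vertex of $S$ is a leaf, and every such tree arises this way. Hence $\rho_v=\tau(G-v)/\tau(G)=\Pr(v\in\ell(T))/d(v)$ for a uniform spanning tree $T$, so $\sum_v\rho_v=\beta(G)$ exactly. Step (3) is then equivalent to approximate independence of the leaf events: $\Pr(S\subseteq\ell(T))=\prod_{v\in S}\Pr(v\in\ell(T))\,(1+O(k^2/n))$.

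This fails for graphs in the class under consideration. Take disjoint cliques $A,B$ on $n/2$ vertices, fix $u\in A$ and $v\in B$, and join $u$ to all of $B$ and $v$ to all of $A$. Then $\delta=n/2$. Both $G-u$ and $G-v$ are connected, so $\rho_u\rho_v>0$. But $G-\{u,v\}$ is disconnected, so $\tau(G-\{u,v\})=0$.

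Here $R_{\mathrm{eff}}(u,v)\le 2/n$, so no ``electrically far apart'' proviso excludes this pair. Your assertion that all effective resistances in dense graphs are $O(1/n)$ is also false: the endpoints of a bridge joining two cliques have resistance $1$. At best an averaged form of (3) could hold. The regularity decomposition with a threshold at scale $n^{2/3}$ is a programme rather than an argument, and your explanation of the exponent $1/3$ is a guess.

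Your first sketch via $s_{n-k}/s_{n-k+1}$ is circular in the same way. The behaviour of $\sum_{v\notin T}d_T(v)$ for a typical $(n-k)$-vertex subtree depends on the degrees of the missing vertices, and their law is exactly what you are trying to determine.

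The missing idea is that you do not need decorrelation of individual leaf events, only concentration of one scalar.

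Summing the extension identity over all $k$-sets gives $s_{n-k}=(1+O(k^2/n))\sum_{T\vdash G}e_k\bigl(\{1/d(v)\}_{v\in\ell(T)}\bigr)$, where $e_k$ is the elementary symmetric polynomial. Since $1/d(v)\le 1/(\alpha n)$, you get $e_k=\frac{w(T)^k}{k!}(1+O(k^2/n))$ whenever $w(T)=\sum_{v\in\ell(T)}1/d(v)$ is bounded below. In all cases $e_k\le w(T)^k/k!\le\alpha^{-k}/k!$.

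The function $w$ is $2/\delta$-Lipschitz in the edge-indicator vector. So the Pemantle--Peres inequality for the uniform spanning tree gives $\Pr(|w(T)-\beta|\ge b)\le 2\exp(-\alpha^2b^2n/32)$, where $\beta=\mathbb{E}\,w(T)$ by the $k=1$ identity.

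Take $b=n^{-1/3}$. Trees with $|w(T)-\beta|\le b$ each contribute $\frac{\beta^k}{k!}(1+O(kn^{-1/3}))$. The remaining trees contribute $O\bigl(s_n(G)\,\alpha^{-k}e^{-\kappa n^{1/3}}/k!\bigr)$ in total, which is negligible relative to $s_n(G)\beta^k/k!$ for $k=o(n^{1/3})$. Balancing the error $kb$ against the requirement $nb^2\gg k$ is the actual source of $n^{-1/3}$.

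You also need $\beta$ bounded below, which your step (4) uses implicitly. This follows from $w(T)\ge|\ell(T)|/n$ together with the Hladk\'y--Nachmias--Tran theorem: a uniform spanning tree of such a graph has at least $(e^{-1}-o(1))n$ leaves with high probability.
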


Under the same assumption, it was already shown in~\cite{wagner2021probability} that the probability for a random subtree to be spanning is bounded below by a positive constant that only depends on $\alpha$. The key ingredient that we will use here to refine the argument, which is based on double counting, is a theorem of Pemantle and Peres \cite{pemantle2014concentration} on the concentration of certain functionals.

We can use Theorem~\ref{thm:poisson} to infer information on the roots of the polynomial $S(G;x)$ when $G$ is a dense graph. Heuristically, the above theorem implies that $S(G;x) \sim s_n(G) x^n e^{\beta(G)/x}$, and since the exponential function has no zeros, one expects the roots of $S(G;x)$ to be close to $0$. We make this argument precise and prove the following theorem, which is the second main result of this paper.

\begin{theorem}\label{thm:roots}
Let $\alpha > 0$ be fixed, and consider a connected graph $G$ with $n$ vertices whose minimum degree $\delta$ is at least $\alpha n$. Then we have, for every constant $C > 6$,
$$\max \{|x|\,:\, x \in \C,\, S(G;x) = 0 \} \leq \frac{C}{\alpha \log n}$$
if $n$ is sufficiently large.
\end{theorem}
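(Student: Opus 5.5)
We treat $S(G;x)$ as a polynomial in $y = 1/x$. Write
$$\frac{S(G;x)}{s_n(G)x^n} = \sum_{k=0}^{n-1} a_k y^k, \qquad a_k = \frac{s_{n-k}(G)}{s_n(G)}, \qquad y = 1/x.$$
A root $x$ with $|x| > \frac{C}{\alpha\log n}$ corresponds to a root $y$ of $P(y)=\sum_k a_k y^k$ with $|y| < R := \frac{\alpha\log n}{C}$. So it suffices to show that $P$ has no zeros in the disc $|y|\le R$.

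The strategy is to compare $P$ with $e^{\beta y}$, where $\beta=\beta(G)$, and to use a Rouché-type argument, or simply the bound $|P(y)-e^{\beta y}|<|e^{\beta y}|$ on the disc. We first need a bound on $\beta$. By the double-counting argument behind Theorem~\ref{thm:poisson}, or directly, we expect $\beta(G)\le \frac{1+o(1)}{e\alpha}$ or at least $\beta = O(1/\alpha)$. Heuristically, a spanning tree loses a leaf to give an $(n-1)$-vertex tree, and each such tree extends in at least $\delta\ge\alpha n$ ways. Of order $n/e$ leaves are typical, so $\beta\approx \frac{\#\text{leaves}}{\#\text{extensions}}\lesssim \frac{1}{e\alpha}$. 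We will use a crude bound $\beta\le c_0/\alpha$; this also bounds $|e^{\beta y}|\ge e^{-\beta R}$ from below, which is $n^{-c_0/C}$ on the disc.

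Split the sum at $K = n^{1/3-\epsilon}$. For $k\le K$, Theorem~\ref{thm:poisson} gives $a_k = \frac{\beta^k}{k!}(1+O(kn^{-1/3}))$ uniformly. Hence
$$\Big|\sum_{k\le K}a_ky^k - e^{\beta y}\Big| \le \sum_{k\le K}\frac{(\beta R)^k}{k!}O(kn^{-1/3}) + \sum_{k>K}\frac{(\beta R)^k}{k!}.$$
This is $O(n^{-1/3}\beta R e^{\beta R}) + $ negligible, i.e. $O(n^{-1/3}\log n\cdot n^{c_0/C})$.

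For $k > K$ we need a tail bound valid for all $k$. The natural tool is the ratio inequality $a_{k}\le \frac{c_1}{\alpha k}a_{k-1}$, for some absolute $c_1$ and all $k$. It comes from the same double counting: every tree with $n-k$ vertices is obtained from a tree with $n-k+1$ vertices by deleting a leaf (at most $n$ choices). Each tree with $n-k$ vertices extends to at least roughly $k\cdot \alpha n$ trees with $n-k+1$ vertices, because each of the $k$ missing vertices has at least $\alpha n - k$ neighbours in the tree, which is fine for $k\le \alpha n/2$. A separate crude bound handles $k>\alpha n/2$. Iterating gives $a_k\le \frac{(c_1/\alpha)^k}{k!}$, so the tail is at most $\sum_{k>K}\frac{(c_1R/\alpha)^k}{k!}$, which is superpolynomially small since $K\gg \log n$.

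Combining these estimates, $|P(y)-e^{\beta y}|\le n^{-1/3+c_0/C+o(1)}$, while $|e^{\beta y}|\ge n^{-c_0/C}$. No zeros therefore occur provided $2c_0/C<1/3$, i.e. $C>6c_0$. With $c_0=1$ (that is, $\beta\le \frac{1+o(1)}{\alpha}$; note $\frac{1}{e}<1$ leaves slack for a sharper $\beta$ bound) this gives exactly the threshold $C>6$.

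The main obstacle is obtaining the sharp upper bound $\beta(G)\le(1+o(1))/\alpha$, since the constant $6$ depends on it. The bound I would try first is the leaf-deletion/extension double count: $s_{n-1}\cdot(\text{min extensions}\ge\alpha n) \le s_n\cdot(\#\text{leaves}\le n)$, giving $\beta\le 1/\alpha$. It is also necessary that the error term of Theorem~\ref{thm:poisson} be genuinely uniform up to $k\approx\log n$, which is the case since $k\le n^{1/3-\epsilon}$ is allowed.
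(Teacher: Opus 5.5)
Your overall architecture is the paper's. You compare $\sum_k a_k y^k$ with $e^{\beta y}$ on $|y|\le R=\frac{\alpha\log n}{C}$, and your leaf-deletion double count gives the same bound $\beta\le 1/\alpha$ that the paper gets from $w(T)\le n/\delta$, hence $|e^{\beta y}|\ge n^{-1/C}$. You use Theorem~\ref{thm:poisson} uniformly for $k$ up to a small power of $n$, giving an error $O(n^{1/C-1/3}\log n)$. On $K<k\le\alpha n/2$ you bound $a_k\le(2/\alpha)^k/k!$: each missing vertex has at least $\delta-k+1\ge\alpha n/2$ neighbours in the subtree, and each tree has at most $n$ leaves. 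This is correct, and the paper quotes the same bound from \cite{wagner2021probability}.

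\textbf{The gap is the range $k>L=\lfloor\alpha n/2\rfloor$,} which you dismiss with ``a separate crude bound''. Your double count does not give the ratio inequality there. Once $k$ and $n-k$ both exceed $\alpha n$, a subtree can have a single extension. For example, in a chain of cliques of order about $\alpha n$ joined by single edges, a spanning tree of the first few cliques extends only through one bridge. So per-tree counting yields only $s_{n-k}\le n\,s_{n-k+1}$, from connectivity, and this is too weak. Starting from $a_L\le(2/\alpha)^L/L!=n^{-\alpha n/2+o(n)}$, it bounds the terms near $k=n$ by roughly $a_L\,n^{n-L}R^{n}=n^{(1-\alpha)n+o(n)}$, which does not tend to zero since $\alpha<1$. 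Because $R^k$ reaches $e^{\Theta(n\log\log n)}$, you need a bound on $\sum_{k>L}s_{n-k}$ in terms of $s_{n-L}$ that loses far less than a factor $n$ per step; an $e^{O(n)}$ loss suffices.

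\textbf{The missing idea,} used in the paper, is the inequality $s_1+\cdots+s_m\le 2^m s_m$. By connectivity, every subtree with at most $m$ vertices lies in some subtree with exactly $m$ vertices. A tree on $m$ vertices has at most $2^m$ subtrees, since each is determined by its vertex set. Taking $m=n-L$ and using $R\ge 1$ together with your mid-range bound at $k=L$ gives
\[
\sum_{k>L}a_kR^k\le R^n\,2^{n-L}\,a_L\le\frac{(2R)^n}{\alpha^L L!}=\exp\Bigl(-\tfrac{\alpha}{2}\,n\log n+O(n\log\log n)\Bigr),
\]
by Stirling's formula, as $L\asymp n$. This is smaller than any power of $n$. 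With this step added, your argument is complete and coincides with the paper's proof. Closing with the pointwise bound on the whole closed disc instead of Rouch\'e on the circle is an equally valid way to finish.
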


It is interesting to compare this to the case where $G$ is a tree. Brown and Mol~\cite{brown2020roots} considered the roots of the subtree polynomial of a tree and showed in particular that the modulus of all roots is bounded above by the absolute constant $1 + \sqrt[3]{3}$. On the other hand, they conjectured that the roots of the subtree polynomial of a tree with $n$ vertices have to lie in the annulus $\{x \in \C\,:\, \frac12 \leq |x+\frac12| \leq \frac12 + \negthickspace \sqrt[n-1]{n-1}\}$. This would be in stark contrast to the roots of the subtree polynomial of a dense graph, which cluster around $0$ according to Theorem~\ref{thm:roots}.

\section{Poisson limit: proof of Theorem~\ref{thm:poisson}}

We employ the same double-counting strategy as in \cite{wagner2021probability}, but in a modified version, since the graphs we are dealing with are no longer close to regular. Let us start with some notation. We let $d(v)$ denote the degree of a vertex $v$ in $G$. For a tree $T$, we let $\ell(T)$ be the set of leaves of $T$. Finally, we write $S \leq T$ to indicate that $S$ is a subtree of $T$, and $T \vdash G$ to indicate that $T$ is a spanning tree of $G$. For a spanning tree $T$ of $G$, we define the weight
$$w(T) = \sum_{v \in \ell(T)} \frac{1}{d(v)}.$$
Let $v$ be a vertex and $S$ a spanning tree of $G - v$. The number of ways to extend $S$ to a spanning tree of $G$ (by attaching $v$ as a new leaf) is precisely $d(v)$. So we have
\begin{align}
s_{n-1}(G) &= \sum_{v \in V(G)} \sum_{S \vdash G - v} 1 \nonumber \\
&= \sum_{v \in V(G)} \sum_{S \vdash G - v} \frac{d(v)}{d(v)} \nonumber \\
&= \sum_{v \in V(G)} \sum_{S \vdash G - v} \frac{1}{d(v)} \sum_{T \vdash G, S \leq T} 1 \nonumber \\
&= \sum_{T \vdash G} \sum_{v \in \ell(T)} \frac{1}{d(v)} \nonumber \\
&= \sum_{T \vdash G} w(T) \label{eq:sn1}
\end{align}
by changing the order of summation.

Now we generalize this to $s_{n-k}$, where $k$ is sufficiently small. A tree $S \vdash G - \{v_1,v_2,\ldots,v_k\}$ can be extended in at least $\prod_{j=1}^k (d(v_j) - k)$ ways to a spanning tree $T$ of $G$ by adding an edge from each of the vertices $v_1,v_2,\ldots,v_k$ to one of its neighbours in 
$G - \{v_1,v_2,\ldots,v_k\}$. In this spanning tree $T$, $v_1,v_2,\ldots,v_k$ are all leaves. Conversely, there are clearly at most $\prod_{j=1}^k d(v_j)$ ways to extend $S$ to such a tree. Since
\begin{align*}
\prod_{j=1}^k (d(v_j) - k) &= 
\prod_{j=1}^k \Big( d(v_j) \Big( 1 - \frac{k}{d(v_j)} \Big) \Big)
\geq \prod_{j=1}^k d(v_j) \cdot \Big( 1 - \frac{k}{\delta} \Big)^k \\
&\geq \prod_{j=1}^k d(v_j) \cdot \Big( 1 - \frac{k^2}{\delta} \Big) \geq \prod_{j=1}^k d(v_j) \cdot \Big( 1 - \frac{k^2}{\alpha n} \Big),
\end{align*}
the number of ways to extend $S \vdash G - \{v_1,v_2,\ldots,v_k\}$ to a spanning tree in which $v_1,v_2,\ldots,v_k$ are leaves is
$$\prod_{j=1}^k d(v_j) \cdot \Big( 1 - O \Big( \frac{k^2}{n} \Big) \Big).$$
Now we can write (assuming only $k = o(\sqrt{n})$)
\begin{align*}
s_{n-k}(G) &= \sum_{\{v_1,v_2,\ldots,v_k\} \subseteq V(G)} \sum_{S \vdash G - \{v_1,v_2,\ldots,v_k\}} 1 \\
&= \sum_{\{v_1,v_2,\ldots,v_k\} \subseteq V(G)} \sum_{S \vdash G - \{v_1,v_2,\ldots,v_k\}} \prod_{j=1}^k \frac{d(v_j)}{d(v_j)} \\
&= \sum_{\{v_1,v_2,\ldots,v_k\} \subseteq V(G)} \prod_{j=1}^k \frac{1}{d(v_j)}
\sum_{S \vdash G - \{v_1,v_2,\ldots,v_k\}} \prod_{j=1}^k d(v_j) \\
&= \sum_{\{v_1,v_2,\ldots,v_k\} \subseteq V(G)} \prod_{j=1}^k \frac{1}{d(v_j)} \sum_{\substack{T \vdash G \\ \{v_1,v_2,\ldots,v_k\} \subseteq \ell(T)}} \Big( 1 + O \Big( \frac{k^2}{n} \Big) \Big) \\
&= \Big( 1 + O \Big( \frac{k^2}{n} \Big) \Big) \sum_{T \vdash G} \sum_{\{v_1,v_2,\ldots,v_k\} \subseteq \ell(T)} \prod_{j=1}^k \frac{1}{d(v_j)} .
\end{align*}

Our next task is to approximate the sum
$$\sum_{\{v_1,v_2,\ldots,v_k\} \subseteq \ell(T)} \prod_{j=1}^k \frac{1}{d(v_j)}$$
by
$$\frac1{k!} \Big( \sum_{v \in \ell(T)} \frac{1}{d(v)} \Big)^k = \frac{w(T)^k}{k!}.$$
Before we do so, let us make a few observations: first, note that $w(T)$ is bounded above by a positive constant under our conditions since
\begin{equation}\label{eq:1alpha_bound}
w(T) = \sum_{v \in \ell(T)} \frac{1}{d(v)} \leq \frac{n}{\delta} \leq \frac{1}{\alpha}.
\end{equation}
It follows that
\begin{equation}\label{eq:sum_upperbd}
    \sum_{\{v_1,v_2,\ldots,v_k\} \subseteq \ell(T)} \prod_{j=1}^k \frac{1}{d(v_j)} \leq \frac1{k!} \Big( \sum_{v \in \ell(T)} \frac{1}{d(v)} \Big)^k = \frac{w(T)^k}{k!} \leq \frac{\alpha^{-k}}{k!}.
\end{equation}
Next, we consider the average of $w(T)$ over all spanning trees. By~\eqref{eq:sn1}, it is
$$\frac{1}{s_n(G)} \sum_{T \vdash G} w(T) = \frac{s_{n-1}(G)}{s_n(G)} = \beta(G),$$
thus also $\beta(G) \leq \frac{1}{\alpha}$ by~\eqref{eq:1alpha_bound}. We can apply the following result due to Hladk\'y, Nachmias and Tran \cite{hladky2018local} (actually a slightly weaker statement than what they proved) to show that $\beta(G)$ is bounded below by a positive constant  as well:

\begin{lemma}[{see~\cite[Theorem 1.5]{hladky2018local}}]
For any $\epsilon,\alpha > 0$ there exists an integer $n_0$ such that for every graph with $n \geq n_0$ vertices and minimum degree at least $\alpha n$, the probability that a uniformly random spanning tree has less than $(e^{-1}-\epsilon)n$ leaves is at most $\epsilon$.
\end{lemma}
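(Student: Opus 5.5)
This lemma is quoted from \cite{hladky2018local}, so in the paper it is used as a black box. If I had to prove it, I would split it into two parts. The first is a bound on the \emph{expected} number of leaves, $\mathbb{E}\,|\ell(T)| \geq (e^{-1}-\epsilon/2)n$, for a uniformly random spanning tree $T$. The second is a \emph{concentration} statement, $\mathrm{Var}\,|\ell(T)| = o(n^2)$. Chebyshev's inequality then gives the claim once $n \geq n_0$.

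For the first moment I would use the Aldous--Broder algorithm. Run a simple random walk on $G$ until it covers the graph, and keep the edge along which each vertex is first entered; this produces a uniform spanning tree. A vertex $v$ is a leaf exactly when no vertex is first entered from $v$. At each visit to $v$ the walk moves to a uniformly random neighbour of $v$. So, conditionally on the history, the chance that this visit ``discovers'' a new vertex equals the fraction of still-unvisited neighbours of $v$. Since $\delta \geq \alpha n$, the walk mixes in $O(1)$ steps and its stationary measure is $\pi(u) = d(u)/2m \asymp 1/n$. Over the relevant time scale $t = \Theta(n)$, the visits to $v$ therefore form approximately a Poisson process of rate $\pi(v)$. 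Likewise, a fixed neighbour $u$ is still unvisited at time $t$ with probability roughly $\exp(-t\pi(u))$.

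Combining these, $\mathbb{P}(v \in \ell(T))$ should be approximately $\mathbb{E}\exp(-\Lambda_v)$, where
\[
\Lambda_v = \sum_{\text{visits of } v \text{ at times } t} \frac{|\{u \in N(v) : u \text{ unvisited at } t\}|}{d(v)} .
\]
I would then bound $\mathbb{E}\Lambda_v$: integrating the visit rate against the unvisited fraction, after the first hitting time of $v$, should give a quantity at most $1+o(1)$ on average over $v$. For the complete graph this heuristic gives $\mathbb{P}(v\in\ell(T)) = (1-1/n)^{n-2} \to e^{-1}$, which serves as a sanity check. Jensen's inequality ($\mathbb{E} e^{-X} \geq e^{-\mathbb{E}X}$) converts the bound on $\mathbb{E}\Lambda_v$ into the lower bound $e^{-1}-o(1)$ for the leaf probability. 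Summing over $v$ gives the first moment.

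For concentration, I would show that for most pairs $v \neq v'$ the events $\{v \in \ell(T)\}$ and $\{v' \in \ell(T)\}$ are asymptotically independent. In the Aldous--Broder picture, the visits to $v$ and to $v'$ interact only through the shared covering profile, and that profile is concentrated because of fast mixing. An alternative is to use the negative association of the edge indicators of a uniform spanning tree (the measure is strongly Rayleigh). Together with the fact that the degree of $v$ in $T$ is determined by edges at $v$, this should give a bounded-difference or negative-correlation argument for the leaf indicators, and hence $\mathrm{Var}\,|\ell(T)| = o(n^2)$.

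I expect the main obstacle to be making the first-moment heuristic rigorous for non-regular dense graphs. This requires uniform (in $v$) quantitative control on the mixing of the walk, on the unvisited-neighbour profile of $v$ over time, and on the Poisson approximation of the visits. It also requires checking that the averaged exponent is at most $1+o(1)$ even though the $d(u)$ vary. I would first try to replace time by ``normalized local time'' so that degree inhomogeneities cancel. If that failed, I would instead follow the route of \cite{hladky2018local}: show that the uniform spanning tree converges locally to a Poisson$(1)$ Galton--Watson tree conditioned to survive. In that limit the root has degree $1+\mathrm{Poisson}(1)$, so it is a leaf with probability $e^{-1}$.
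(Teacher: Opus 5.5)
The paper gives no proof of this lemma; it simply quotes it from Hladk\'y--Nachmias--Tran, as you say. Your own sketch, however, has a genuine gap: its central step, the approximation $\mathbb{P}(v\in\ell(T))\approx\mathbb{E}\,e^{-\Lambda_v}$, is false. Fix the visit times $t_1<t_2<\cdots$ of $v$ and the unvisited fraction $p(t)$ of $N(v)$. Then the probability that no vertex is first entered from $v$ is $\prod_i(1-p(t_i))$, not $e^{-\sum_i p(t_i)}$. The $p(t_i)$ are not small: at the first visit, $p(t_1)$ is of order one. Take your own heuristic, with visits forming a Poisson process of rate $\pi(v)$ and a deterministic profile $p(t)$. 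The marking theorem and the Laplace functional then give
\[
\mathbb{P}(v\in\ell(T))\approx\exp\Big(-\int_0^\infty\pi(v)\,p(t)\,dt\Big)=e^{-\mathbb{E}\Lambda_v},\qquad \mathbb{E}\,e^{-\Lambda_v}\approx\exp\Big(-\int_0^\infty\pi(v)\big(1-e^{-p(t)}\big)\,dt\Big),
\]
and the second quantity is strictly larger. For $K_n$, where $\pi(v)=1/n$ and $p(t)\approx e^{-t/n}$, these are $e^{-1}$ and $\exp\big(-\int_0^1\frac{1-e^{-s}}{s}\,ds\big)\approx 0.45$. So your heuristic does not pass the sanity check you cite; you only quoted the known value $(1-1/n)^{n-2}$. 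Because the leaf probability lies \emph{below} $\mathbb{E}\,e^{-\Lambda_v}$, the inequality $\mathbb{E}\,e^{-\Lambda_v}\ge e^{-\mathbb{E}\Lambda_v}$ gives nothing. The argument should instead run as follows: prove $\mathbb{P}(v\in\ell(T))\ge e^{-\lambda_v}-o(1)$ with the \emph{deterministic} $\lambda_v=\mathbb{E}\Lambda_v$, then apply Jensen \emph{across vertices}. The averaging step you flagged as the main obstacle is in fact exact and trivial. $\Lambda_v$ is the compensator of $N_v$, the number of vertices first entered from $v$, and $\sum_v N_v=n-1$, so $\sum_v\mathbb{E}\Lambda_v=n-1$.

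What remains hard is the Poisson approximation with a deterministic profile, uniformly in $v$. There, your claim that $\delta\ge\alpha n$ forces $O(1)$ mixing is false. Two copies of $K_{n/2}$ joined by one edge have $\delta=n/2-1$, yet the walk needs order $n^2$ steps to cross, far more than the $\Theta(n\log n)$ needed to cover one half. So on your time scale, visits to the far half do not occur at rate $\pi(v)$, and you must localize to well-connected pieces with random entrance times. Your fallback also fails for non-regular graphs. In $K_{n/4,3n/4}$, a vertex on the small side has $\deg_T-1\sim\mathrm{Bin}(3n/4-1,4/n)$ and one on the large side has $\deg_T-1\sim\mathrm{Bin}(n/4-1,4/(3n))$. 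So the uniform spanning tree has about $(\frac14e^{-3}+\frac34e^{-1/3})n\approx0.55\,n$ leaves and cannot converge locally to the Poisson$(1)$ Galton--Watson tree conditioned to survive. The constant $e^{-1}$ is a Jensen bound, not a universal local limit: heuristically $\lambda_v\approx\sum_{u\sim v}1/d(u)$, whose average over $v$ is exactly $1$. Finally, the concentration half needs no correlation argument. $H\mapsto|\ell(H)|$ is $2$-Lipschitz, so the Pemantle--Peres lemma quoted in the paper gives it at once, and the paper only uses the first-moment consequence anyway.
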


As a consequence, the expected number of leaves in a uniformly random spanning tree, i.e.,
$$\frac{1}{s_n(G)} \sum_{T \vdash G} |\ell(T)|,$$
is at least $(e^{-1} - o(1))n$, and since
$$w(T) = \sum_{v \in \ell(T)} \frac{1}{d(v)} \geq \frac{|\ell(T)|}{n}$$
for every spanning tree $T$, it follows that $\beta(G)$ is at least $e^{-1} - o(1)$. In particular, it is bounded below by a positive constant. Now we use a result of Pemantle and Peres to show that $w(T)$ is concentrated around its expected value $\beta(G)$.

\begin{lemma}[{see~\cite[Theorem 1.1]{pemantle2014concentration}}]\label{lem:pemantle_peres}
Let $G$ be a finite connected graph with vertex set $V$ and edge set $E$. Let $f: \{0,1\}^E \to \R$ be a 
function with Lipschitz constant $1$ with respect to the Hamming distance (i.e., if $\mathbf{x},\mathbf{x}' \in \{0,1\}^E$ only differ in one position, then $|f(\mathbf{x}) - f(\mathbf{x}')| \leq 1$). Encoding a spanning tree as an element of $\{0,1\}^E$ (with a $1$ standing for an edge that is part of the spanning tree and a $0$ for an edge that is not), $f$ also becomes a function on the set of spanning trees of $G$. Let $T$ denote a uniformly random spanning tree of $G$, and let $X = f(T)$. The random variable $X$ satisfies the concentration inequalities
$$P \big( X - \mathbb{E}(X) \geq a \big) \leq \exp \Big({-} \frac{a^2}{8|V|} \Big)$$
and
$$P \big( X - \mathbb{E}(X) \leq -a \big) \leq \exp \Big({-} \frac{a^2}{8|V|} \Big)$$
for every $a > 0$.
\end{lemma}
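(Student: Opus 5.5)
This lemma is quoted from Pemantle and Peres, and the paper uses it without proof. Here is how I would prove it, using a Doob martingale over the edges together with a coupling coming from negative dependence of the uniform spanning tree. One step (the scalar inequality in the last paragraph) I have not verified.

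\textbf{Martingale setup.} Order the edges $E=\{e_1,\ldots,e_m\}$ and write $x_i$ for the indicator that $e_i\in T$. Let $\mathcal F_i$ be the $\sigma$-algebra generated by $x_1,\ldots,x_i$. Form the Doob martingale $M_i=\mathbb E(X\mid\mathcal F_i)$, so that $M_0=\mathbb E(X)$ and $M_m=X$, with increments $D_i=M_i-M_{i-1}$.

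Conditioning the uniform spanning tree on the first $i-1$ revealed edges gives the uniform spanning tree of a minor of $G$ (contract the present edges, delete the absent ones). In that minor, let $p_i=P(x_i=1\mid\mathcal F_{i-1})$. Then
$$D_i=a_i\,(x_i-p_i),\qquad a_i=\mathbb E(X\mid\mathcal F_{i-1},x_i=1)-\mathbb E(X\mid\mathcal F_{i-1},x_i=0).$$

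\textbf{Bounding $|a_i|\le 2$.} I would use the Feder--Mihail negative correlation property of uniform spanning trees, equivalently their strong Rayleigh property. From it I expect a coupling of the two conditioned trees $T^1$ (containing $e_i$) and $T^0$ (avoiding $e_i$) in which $T^1$ is obtained from $T^0$ by adding $e_i$ and removing a single other edge. Concretely, I would take $T^0$ and add $e_i$, which creates a cycle, and then remove an edge of that cycle chosen with suitable probabilities so that the result has the conditioned law. Checking that this produces the correct law is the standard exchange argument for spanning-tree measures. Since the coupled trees differ in at most two coordinates and $f$ is $1$-Lipschitz, this gives $|a_i|\le 2$.

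\textbf{Getting $|V|$ instead of $|E|$.} Azuma's inequality with these increments would give a denominator of order $m$, not $|V|$. The fix I would use is that the realized count $\sum_i x_i=|V|-1$ is deterministic. I would aim to prove the one-step inequality
$$\mathbb E\big(\exp(\lambda D_i-c\lambda^2x_i)\,\big|\,\mathcal F_{i-1}\big)\le 1$$
for $|\lambda|$ in a suitable range and an absolute constant $c$ (hopefully $c=2$). This reduces to a scalar inequality: for $p\in[0,1]$ and $|a|\le 2$,
$$p\,e^{\lambda a(1-p)-c\lambda^2}+(1-p)\,e^{-\lambda ap}\le 1.$$
Given that, $\exp\bigl(\lambda(M_k-M_0)-c\lambda^2\sum_{i\le k}x_i\bigr)$ is a supermartingale. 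Taking $k=m$ yields
$$\mathbb E\,e^{\lambda(X-\mathbb E X)}\le e^{c\lambda^2(|V|-1)}.$$
A Chernoff bound with $\lambda=a/(2c|V|)$ then gives $\exp(-a^2/(4c|V|))$, which equals $\exp(-a^2/(8|V|))$ when $c=2$. The lower tail follows by applying the same argument to $-f$.

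\textbf{Main obstacle.} The hardest step is the scalar inequality above, together with reconciling the range of $\lambda$ it requires with the Chernoff optimization. If it fails with $c=2$ uniformly in $\lambda$, I would fall back on a Freedman--Bernstein martingale inequality. That uses the conditional variances $\le a_i^2p_i(1-p_i)\le 4p_i$ and controls $\sum_i p_i$ by comparing it with $\sum_i x_i=|V|-1$ through a second martingale. The cost is possibly worse constants than $8|V|$.
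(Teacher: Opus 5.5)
The step you flagged is where the argument breaks, and with $c=2$ it cannot be repaired. Put $a=-2$ in your scalar inequality and call its left-hand side $h(p)$. Then $h(0)=1$ and
\[
h'(0)=e^{-2\lambda-2\lambda^2}-(1-2\lambda)>0 \qquad\text{for every } \lambda>0 .
\]
For $t=2\lambda\in(0,1)$ this holds because $\log(1-t)<-t-t^2/2$, and for $\lambda\ge\tfrac12$ it holds because $1-2\lambda\le 0$. So $h(p)>1$ for small $p>0$; for instance, $\lambda=\tfrac14$ and $p=\tfrac1{10}$ give $h(p)\approx 1.0024$. Letting $p\to0$ shows in general that your one-step bound needs $e^{\lambda a-c\lambda^2}\le 1+\lambda a$ for all $|a|\le 2$, which forces $c>2$ and also $\lambda<\tfrac12$. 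Your argument uses nothing about $(a_i,p_i)$ beyond $|a_i|\le 2$. So the compensator $c\lambda^2x_i$ can give at best a bound like $\exp(-a^2/(4c|V|))$ with some $c>2$. Your Freedman fallback also gives only an unspecified constant. Either would suffice for how the paper uses the lemma, since it only needs some bound of the form $e^{-\kappa n^{1/3}}$. Neither proves the stated inequality with $8|V|$.

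The missing idea is to get $|V|$ from the number of martingale steps rather than from a compensator, by revealing the edges of $T$ instead of all coordinates. Given $T$, let $s_1,\dots,s_m$ with $m=|V|-1$ be a uniformly random ordering of its edges, and set $M_j=\mathbb{E}(X\mid s_1,\dots,s_j)$, so that $M_0=\mathbb{E}(X)$ and $M_m=X$. Given $F=\{s_1,\dots,s_{j-1}\}$, the tree $T$ is uniform among spanning trees containing $F$. With $e=s_j$ and $q=P(e\in T\mid F\subseteq T)$, this gives
\[
M_j-M_{j-1}=(1-q)\bigl(\mathbb{E}(X\mid F\subseteq T,\ e\in T)-\mathbb{E}(X\mid F\subseteq T,\ e\notin T)\bigr).
\]
Your exchange coupling, applied to the uniform spanning tree of $G/F$, bounds the bracket by $2$; if $q=1$ the increment is simply $0$. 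Azuma--Hoeffding with $m$ increments bounded by $2$ then gives $\exp(-a^2/(8(|V|-1)))\le\exp(-a^2/(8|V|))$. This is exactly where $8=2\cdot 2^2$ comes from.

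One further correction: negative correlation (Feder--Mihail) does not by itself give that exchange coupling. What you need is the stochastic covering property. The uniform spanning tree has it because it is strongly Rayleigh (Borcea--Br\"and\'en--Liggett), and it carries over to the conditioned measures, which are again uniform spanning trees of minors.
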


If we interpret an element of $\{0,1\}^E$ as a subgraph $H$ of $G$ (formed by the edges for which the corresponding entry is $1$), then we can define $w(H)$ in the same fashion as for spanning trees, namely by
$$w(H) = \sum_{v \in \ell(H)} \frac{1}{d(v)},$$
where $\ell(H)$ is the set of vertices whose degree in $H$ is $1$. With this, $w$ becomes a function with Lipschitz constant $\frac{2}{\delta}$ (adding or removing an edge only affects two vertices, so $w$ can only change by at most $\frac{2}{\delta}$). Thus, Lemma~\ref{lem:pemantle_peres} applies to $\frac{\delta}{2}w$, giving us
$$P \big( \big| w(T) - \mathbb{E}(w(T)) \big| \geq b \big) \leq 2 \exp \Big({-} \frac{ \delta^2 b^2}{32n} \Big) \leq 2 \exp \Big({-} \frac{\alpha^2 b^2}{32} n \Big),$$
where probability and expected value are taken with respect to the uniform measure on spanning trees. We apply this bound with $b = n^{-1/3}$ to find that the proportion of spanning trees for which $|w(T) - \beta(G)| = 
| w(T) - \mathbb{E}(w(T))| > n^{-1/3}$ is $O(e^{-\kappa n^{1/3}})$ for some constant $\kappa > 0$. In view of the upper bound~\eqref{eq:sum_upperbd}, we thus have
\begin{align*}   
s_{n-k}(G) &= \Big( 1 + O \Big( \frac{k^2}{n} \Big) \Big) \sum_{T \vdash G} \sum_{\{v_1,v_2,\ldots,v_k\} \subseteq \ell(T)} \prod_{j=1}^k \frac{1}{d(v_j)} \\
&= \Big( 1 + O \Big( \frac{k^2}{n} \Big) \Big) \sum_{\substack{T \vdash G \\
|w(T) - \beta(G)| \leq n^{-1/3}}
} \sum_{\{v_1,v_2,\ldots,v_k\} \subseteq \ell(T)} \prod_{j=1}^k \frac{1}{d(v_j)} + O \Big( s_n(G) \frac{\alpha^{-k}}{k!} e^{-\kappa n^{1/3}} \Big).
\end{align*}

Now assume that $|w(T) - \beta(G)| \leq n^{-1/3}$. In particular, $w(T)$ is bounded below by a constant $\eta > 0$ (provided $n$ is large enough). We consider the probability measure on $\ell(T)$ where every $v \in \ell(T)$ is selected with probability $\frac{1}{d(v)w(T)}$. Then
$$\frac{k!}{w(T)^k} \sum_{\{v_1,v_2,\ldots,v_k\} \subseteq \ell(T)} \prod_{j=1}^k \frac{1}{d(v_j)}$$
is precisely the probability that $k$ independently chosen random elements of $\ell(T)$ are distinct. The probabilities can be bounded as follows:
$$\frac{1}{d(v)w(T)} \leq \frac{1}{\delta w(T)} \leq \frac{1}{\delta \eta} \leq \frac{1}{\alpha \eta n}.
$$
Thus the probability of choosing $k$ distinct elements is at least
$$\prod_{j=0}^{k-1} \Big( 1 - \frac{j}{\alpha \eta n} \Big) \geq \Big( 1 - \frac{k}{\alpha \eta n} \Big)^k \geq 1 - \frac{k^2}{\alpha \eta n} = 1 - O \Big( \frac{k^2}{n} \Big).$$
So it follows that
\begin{align*}
    \sum_{\{v_1,v_2,\ldots,v_k\} \subseteq \ell(T)} \prod_{j=1}^k \frac{1}{d(v_j)}
&= \frac{w(T)^k}{k!} \Big( 1 - O \Big( \frac{k^2}{n} \Big) \Big) = \frac{1}{k!} \big(\beta(G) + O(n^{-1/3}) \big)^k \Big( 1 - O \Big( \frac{k^2}{n} \Big) \Big)\\
&= \frac{1}{k!} \beta(G)^k \Big( 1 + O \Big( \frac{k}{n^{1/3}} + \frac{k^2}{n} \Big) \Big)
\end{align*}
whenever $|w(T) - \beta(G)| \leq n^{-1/3}$. Here, it was used that $\beta(G)$ is bounded below by a positive constant to deduce $\big(\beta(G) + O(n^{-1/3}) \big)^k = \beta(G)^k (1 + O (kn^{-1/3}))$.
Putting everything together, we have
$$\frac{s_{n-k}(G)}{s_n(G)} = \frac{1}{k!} \beta(G)^k \Big( 1 + O \Big( \frac{k}{n^{1/3}} + \frac{k^2}{n} \Big) \Big) + O \Big( \frac{\alpha^{-k}}{k!} e^{-\kappa n^{1/3}} \Big).$$
Since we are assuming that $k = o(n^{1/3})$, we have $\frac{k^2}{n} = o(\frac{k}{n^{1/3}})$. Moreover, we have
$$\frac{\alpha^{-k}}{\beta(G)^k} e^{-\kappa n^{1/3}} \leq
\exp \Big( \lambda k - \kappa n^{1/3} \Big) = \exp \Big({-}\kappa n^{1/3} \Big(1 - \frac{\lambda k}{\kappa n^{1/3}} \Big) \Big)$$
for some constant $\lambda$, and this is also $o(\frac{k}{n^{1/3}})$.
So we can absorb all error terms into the $O(\frac{k}{n^{1/3}})$ (with a $O$-constant that only depends on $\alpha$ and $\epsilon$ if we have $k \leq n^{1/3-\epsilon}$):
$$\frac{s_{n-k}(G)}{s_n(G)} = \frac{1}{k!} \beta(G)^k \Big(1 + O \Big( \frac{k}{n^{1/3}} \Big) \Big).$$

\section{Roots of the subtree polynomial: proof of Theorem~\ref{thm:roots}}

We now apply the estimates from the previous section to the subtree polynomial. Set $F(y)=\sum_{k=0}^{n-1} \frac{s_{n-k}(G)}{s_{n}(G)} y^k$, so that $S(G;x)=s_n(G) x^n F(\frac{1}{x})$. To simplify notation, we drop the dependence on $G$ and write $s_i$ and $\beta$ instead of $s_i(G)$ and $\beta(G)$. Moreover, we set $r = \frac{\alpha \log n}{C}$. We prove that if $n$ is sufficiently large,
$$
    F(y) \neq 0 \text{ for } |y|\leq r
$$ 
by applying Rouché's Theorem to the functions $F(y)-e^{\beta y}$ and $e^{\beta y}$ (note that the latter has no zeros).

\begin{proposition}
    Under the conditions of Theorem~\ref{thm:roots}, we have, for sufficiently large $n$,
    \begin{equation}\label{eq:rouche}
        \left | F(y)-e^{\beta y} \right | < \left | e^{\beta y} \right |
    \end{equation}
    for all $y \in \C$ with $|y|=r = \frac{\alpha \log n}{C}$.
\end{proposition}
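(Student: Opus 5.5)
We need to bound $|F(y)-e^{\beta y}|$ on the circle $|y|=r$. Since $|e^{\beta y}| = e^{\beta \operatorname{Re} y} \geq e^{-\beta r}$ and $\beta \leq 1/\alpha$, it suffices to show
$$\sum_{k=0}^{n-1} \Big|\frac{s_{n-k}}{s_n} - \frac{\beta^k}{k!}\Big| r^k + \sum_{k\geq n} \frac{\beta^k r^k}{k!} < e^{-r/\alpha} = n^{-1/C}.$$
We will split the sum at a threshold $K = n^{1/3-\epsilon}$ for a small fixed $\epsilon>0$. The tail $\sum_{k \ge n}$ is negligible because $\beta r = O(\log n)$, so it is bounded by something like $(e\beta r/n)^n$.

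\textbf{Small $k$ ($k \le K$).} By Theorem~\ref{thm:poisson}, uniformly in $k \le K$,
$$\Big|\frac{s_{n-k}}{s_n}-\frac{\beta^k}{k!}\Big| \leq A\,\frac{k}{n^{1/3}} \frac{\beta^k}{k!}$$
for a constant $A$. Summing with weights $r^k$ gives at most $A n^{-1/3} \beta r\, e^{\beta r} \le A n^{-1/3} (r/\alpha) n^{1/C}$. This is $o(n^{-1/C})$ because $2/C < 1/3$ precisely when $C>6$. This is where the hypothesis $C>6$ enters.

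\textbf{Large $k$ ($K < k \le n-1$).} We bound the two terms separately. The Poisson part satisfies $\sum_{k>K} (\beta r)^k/k! \le (e\beta r/K)^K$, which is superpolynomially small. For $s_{n-k}/s_n$ we need a crude bound valid for all $k$. The main obstacle is that Theorem~\ref{thm:poisson} says nothing here, so we need monotone control of the ratios $s_{n-k-1}/s_{n-k}$. Two approaches are available:

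1. Reuse the double counting directly. Each tree with $n-k$ vertices extends to at least $\max(\delta-k,1)$ trees with $n-k+1$ vertices, obtained by attaching a missing vertex that has a neighbour in the tree, using connectivity. Each tree with $n-k+1$ vertices arises at most $n$ times. The upper-bound version is the relevant one: every $(n-k)$-vertex tree $S$ comes from removing leaves of $(n-k+1)$-trees, and $S$ has at least $(\delta-(k-1))$ extensions by any given missing vertex (when $k<\delta$). Then
$$(n-k+1)$$
trees contain at most $n$ leaves each. This gives
$$s_{n-k}\,(\delta-k)\,k \le s_{n-k+1}\, n,$$
that is, $s_{n-k}/s_{n-k+1} \le n/(k(\delta - k))$. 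For $k \le \alpha n/2$ this yields $s_{n-k}/s_n \le (2/\alpha)^k/k!$, and the sum $\sum_{k>K} (2r/\alpha)^k/k!$ is again superpolynomially small.

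2. For $k > \alpha n/2$, use the trivial bound $s_{n-k} \le \binom{n}{k} n^{n-k-2}$, since the trees must lie on chosen vertex sets, via Cayley. Combine this with $s_n \ge$ the number of spanning trees of a graph with minimum degree $\alpha n$, which is at least something like $(c\alpha n)^{n-1}/n$. This holds by a known lower bound, or by counting trees built greedily. Multiplying by $r^k = (\log n)^{O(k)}$, the resulting terms are exponentially small in $n$.

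The step I expect to need the most care is the regime $k>\alpha n/2$. If the Cayley-type comparison is too lossy, I would instead iterate the ratio argument of approach 1 with the bound $\delta-k$ replaced by $1$ (using connectivity). This would give $s_{n-k}/s_n \le (2/\alpha)^{\alpha n/2}\, n^{k-\alpha n/2}/k!$-type estimates, which are still superpolynomially small once multiplied by $r^k$. Combining all ranges yields the strict inequality~\eqref{eq:rouche} for large $n$.
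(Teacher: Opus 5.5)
Your argument matches the paper's up to the regime $k>\alpha n/2$. That includes the lower bound $|e^{\beta y}|\ge n^{-1/C}$, the small-$k$ sum via Theorem~\ref{thm:poisson} (where $C>6$ enters), and the Poisson tail. It also includes the midrange bound $\frac{s_{n-k}}{s_n}\le\frac{(2/\alpha)^k}{k!}$: your double count $k(\delta-k+1)\,s_{n-k}\le n\,s_{n-k+1}$ is exactly what lies behind the estimate the paper imports from its earlier work. Cutting at $n^{1/3-\epsilon}$ instead of $\lfloor n^{1/6}\rfloor$ makes no difference.

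For $k>L=\lfloor\alpha n/2\rfloor$ the two routes differ. The paper never needs an absolute lower bound on $s_n$. It uses $s_1+\cdots+s_m\le 2^m s_m$: by connectivity every subtree lies in a subtree with $m$ vertices, and such a tree has at most $2^m$ subtrees. This gives $\Sigma_{3,2}\le\frac{r^n}{s_n}2^{n-L}s_{n-L}$, and applying the midrange bound once more at $k=L$ yields $\Sigma_{3,2}\le\frac{2^nr^n}{\alpha^L L!}$. Everything stays relative to $s_n$ and is fully elementary.

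Your Cayley comparison also works numerically: with $s_n\ge(c\alpha n)^{n-1}/n$ the weighted terms for $k>L$ total $\exp(-\Omega(n\log n))$. But it rests on a genuinely nontrivial input, namely a lower bound on the number of spanning trees of a graph with minimum degree $\alpha n$. You could cite a Kostochka-type bound, or use a direct construction. For the construction, take a random set of $\epsilon n$ vertices with $\epsilon<\alpha/2$, chosen so that every vertex has at least $\alpha\epsilon n/2$ neighbours in it. Make it connected by adding $O(1)$ vertices, and attach every other vertex as a leaf. This gives $s_n\ge(\alpha\epsilon n/2)^{(1-\epsilon)n-O(1)}$, which already suffices. ``Counting trees built greedily'' is not yet an argument, so you must cite or prove this step.

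Your fallback does not work. The factor $k$ in $k(\delta-k)$ comes from every missing vertex having at least $\delta-k+1$ neighbours in $S$. Once $k\ge\delta$, a missing vertex may have none, and connectivity only guarantees one extension of $S$ \emph{in total}; two cliques joined by an edge show this. Iterating therefore gives $s_{n-j}/s_{n-j+1}\le n$ per step, not $n/j$. The result is $\frac{s_{n-k}}{s_n}\le\frac{(2/\alpha)^L}{L!}\,n^{k-L}$, and the $1/k!$ you wrote is unjustified. For $k=n-1$ this bound times $r^k$ is at least $n^{n-1-2L}\ge n^{(1-\alpha)n-1}$, which is useless because $\alpha<1$ (as $\delta\le n-1$).
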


\begin{proof}
    First, as $\beta \leq \frac{1}{\alpha}$ by~\eqref{eq:1alpha_bound}, we have
    \begin{equation}\label{eq:rhs_bound}
        \left| e^{\beta y} \right|=e^{\beta \Re(y)}\geq e^{-\beta |y|}= e^{-\beta r} \geq e^{-r/\alpha} = n^{-1/C}.
    \end{equation}
    Next, we expand the left hand side of~\eqref{eq:rouche}. Set $K=\lfloor n^{1/6} \rfloor$. For $|y|=r$, we have
    \begin{align*}
\left | F(y)-e^{\beta y} \right | &= \left| \sum_{k=0}^{\infty} \Big( \frac{s_{n-k}}{s_n} - \frac{\beta^k}{k!} \Big) y^k \right| \\
        &\leq \left| \sum_{k \leq K} \left ( \frac{s_{n-k}}{s_n} - \frac{\beta^k}{k!} \right ) y^k \right| + \left | \sum_{k > K} \frac{\beta^k}{k!} y^k \right| + \left | \sum_{k > K} \frac{s_{n-k}}{s_n} y^k \right| \\
        &\leq \underbrace{\sum_{k \leq K} \left | \frac{s_{n-k}}{s_n} - \frac{\beta^k}{k!} \right | r^k}_{\Sigma_1} 
        + \underbrace{\sum_{k > K} \frac{\beta^k}{k!} r^k}_{\Sigma_2}
        + \underbrace{\sum_{k > K} \frac{s_{n-k}}{s_n} r^k}_{\Sigma_3}.
    \end{align*}
Here, $s_{n-k}$ is interpreted as $0$ if $k \geq n$. By our choice of $K$, Theorem \ref{thm:poisson} applies to the first sum $\Sigma_1$ with a uniform error term: there is a constant $A$ (that only depends on $\alpha$) such that, for $0 \leq k \leq K$,
    $$
        \left| \frac{s_{n-k}}{s_n} - \frac{\beta^k}{k!}  \right| \leq \frac{Ak}{n^{1/3}} \cdot \frac{1}{k!} \beta^k.
    $$
Thus,
\begin{equation*}
    \Sigma_1 \leq \sum_{k \leq K} \frac{Ak}{n^{1/3}} \cdot \frac{1}{k!} \beta^kr^k  \leq \frac{A}{n^{1/3}} \sum_{k \geq 1} \frac{1}{(k-1)!} \beta^kr^k = \frac{A \beta r}{n^{1/3}} e^{\beta r}.
\end{equation*}
Note that $\beta r \leq \frac{r}{\alpha} = \frac{\log n}{C}$. So we have
$$\Sigma_1 \leq \frac{A \log n}{C} n^{1/C-1/3}.$$
Next, we consider the sum $\Sigma_2$. Since $\beta r \leq \frac{\log n}{C}$, we have
$$\Sigma_2 \leq \sum_{k > K} \frac{(\log n)^k}{C^k k!}.$$
For large enough $n$, we have
$$\frac{\frac{(\log n)^{k+1}}{C^{k+1} (k+1)!}}{\frac{(\log n)^k}{C^k k!}} = \frac{\log n}{C(k+1)} \leq \frac{\log n}{Cn^{1/6}} \leq \frac12$$
for all $k \geq K = \lfloor n^{1/6} \rfloor$. Thus, we can bound $\Sigma_2$ by a geometric sum:
$$\Sigma_2 \leq \sum_{k > K} \frac{(\log n)^k}{C^k k!} \leq \frac{(\log n)^K}{C^K K!} \sum_{k > K} \frac{1}{2^{k-K}} = \frac{(\log n)^K}{C^K K!}.$$
By Stirling's formula, this goes faster to $0$ than any power of $n$.

Finally, let us consider $\Sigma_3$. Here, we set $L = \lfloor \frac{\alpha n}{2} \rfloor$ and 
split the range of $k$ further into $K < k \leq L$ and $k > L$:
$$\Sigma_3 = \sum_{K < k \leq L} \frac{s_{n-k}}{s_n} r^k + \sum_{k >  L} \frac{s_{n-k}}{s_n} r^k.$$
Let the sums be denoted by $\Sigma_{3,1}$ and $\Sigma_{3,2}$, respectively. For the first part, we use the bound
$$\frac{s_{n-k}}{s_n} \leq \frac{1}{\alpha^k k!} \left ( 1- \frac{k}{\alpha n} \right )^{-k},$$
(see \cite[Eq.~(1)]{wagner2021probability}). In the range $K < k \leq L$, this can be further bounded by
\begin{equation}\label{eq:quotient_midrange}
\frac{s_{n-k}}{s_n} \leq \frac{1}{\alpha^k k!} \left ( 1- \frac{k}{\alpha n} \right )^{-k} \leq \frac{2^k}{\alpha^k k!}.
\end{equation}
Now the same argument as for $\Sigma_2$ shows that
$$\Sigma_{3,1} \leq \sum_{k > K} \frac{2^kr^k}{\alpha^k k!} 
= \sum_{k > K} \frac{(2 \log n)^k}{C^k k!} \leq \frac{(2 \log n)^K}{C^K K!}$$
for sufficiently large $n$, and this also goes faster to $0$ than any power of $n$.

Finally, for the second part $\Sigma_{3,2}$, we use another estimate from \cite{wagner2021probability}, namely (see \cite[Eq.~(2)]{wagner2021probability})
$$s_1 + s_2 + \cdots + s_r \leq 2^r s_r.$$
This gives us
$$\Sigma_{3,2} = \sum_{k >  L} \frac{s_{n-k}}{s_n} r^k \leq \frac{r^n}{s_n} \sum_{k > L} s_{n-k} \leq \frac{r^n}{s_n} \cdot 2^{n-L} s_{n-L}.
$$
By~\eqref{eq:quotient_midrange}, it follows that
$$
\Sigma_{3,2} \leq \frac{2^n r^n}{\alpha^L L!},
$$
and again Stirling's formula shows that this goes to $0$ faster than any power of $n$.

    Putting everything together, we have $\Sigma_1 = O(n^{1/C-1/3}\log n)$ and $\Sigma_2,\Sigma_{3,1},\Sigma_{3,2} = O(n^{-a})$ for every positive constant $a$. Thus,
$$|F(y)-e^{\beta y}| \leq \Sigma_1 + \Sigma_2 + \Sigma_{3,1} + \Sigma_{3,2} = O(n^{1/C-1/3}\log n).$$
On the other hand, $|e^{\beta y}| \geq n^{-1/C}$ by~\eqref{eq:rhs_bound}. Since $\frac{1}{C} - \frac{1}{3} < -\frac{1}{C}$ by our choice of $C$, this proves~\eqref{eq:rouche} for sufficiently large $n$, and Rouch\'e's theorem yields the desired statement.
\end{proof}

\bibliographystyle{abbrv}
\bibliography{SubtreePolynomial}

\end{document}